\theoremstyle{plain}
\newtheorem{theorem}{Theorem}[section]
\newtheorem{lem}[theorem]{Lemma}
\newtheorem{prop}[theorem]{Proposition}
\theoremstyle{definition}
\newtheorem{ex}[theorem]{Example}
\def\R{{\mathbb R}}
\def\N{{\mathbb N}}
\def\Z{{\mathbb Z}}
\newcommand{\C}{\mathbb{C}}
\def\eps{\varepsilon}
\renewcommand{\phi}{\varphi}
\def\Cl{\mathop\mathrm{Cl}}
\def\Conv{\mathop\mathrm{Conv}}
\newcommand{\re}{\operatorname{Re}}
\newcommand{\im}{\operatorname{Im}}
\renewcommand{\setminus}{\smallsetminus}
\begin{document}
	
	\title{The zero entropy locus for the Lozi maps}
	\author{M.\ Misiurewicz and 
		S.\ \v{S}timac}
	\thanks{S.\v S.\ is supported in part by the Croatian Science 
		Foundation grant IP-2022-10-9820 GLODS}
	
	\renewcommand\footnotemark{}
	
	\date{}
	
	\maketitle
	
	\begin{abstract}
		We study the zero entropy locus for the Lozi maps.
		We first define a region $R$ in the parameter space and
		prove that for the parameters in $R$, the Lozi maps have
		the topological entropy zero. $R$ is contained in a larger 
		region where every Lozi map has a unique period-two orbit, 
		and that orbit is attracting. It is easy to see that the
		zero entropy locus cannot coincide with that larger region,
		since it contains parameters for which the fixed point of 
		the corresponding Lozi map has homoclinic points.
		
	\end{abstract}

	{\it 2020 Mathematics Subject Classification:} 37B40, 37E30, 37B25
	
	{\it Key words and phrases:} Lozi map, topological entropy, zero entropy.		
	
	\baselineskip=18pt
	
	\section{Introduction}\label{sec:intro}
	
	In 1978 Lozi constructed a two parameter family
	$L_{a,b}(x,y)=(1+y-a|x|,bx)$ of piecewise affine homeomorphisms of the
	Euclidean plane to itself, for which he provided numerical evidence
	that for parameters value $a = 1.7$ and $b = 0.5$ the map has strange
	attractor (see~\cite{L}). In 1980 the first author of this paper proved 
	that for a large set of parameters 
	$\mathcal{M} = \{ (a,b) \in \R^2 : b > 0, \ a\sqrt{2} - b > 2, \ 2a + b < 4 \}$, 
	the Lozi maps have indeed hyperbolic strange attractors, and consequently
	positive topological entropy (see~\cite{M}, and also \cite{MS2}). Topological
	entropy of the Lozi maps is positive for much larger set of parameters, which
	includes parameters where the saddle fixed point 
	$X = (\frac{1}{1+a-b}, \frac{b}{1+a-b})$ has homoclinic points. A part of
	that set is the black region in Figure \ref{fig.mfr}.
	
	Although in the last
	forty five years many results about the Lozi maps and their attractors
	have been obtained, many important questions have stayed unanswered yet. 
	It is still not known how topological entropy and periodic points depend 
	on parameters, whether there are parameters for which distinct Lozi maps 
	are topologically conjugate, or their attractors are homeomorphic, to 
	mention just a few well-known open problems.
	
	One of these open problems is to find the set of parameters for which 
	the Lozi maps have zero entropy. It is proved so far that the topological 
	entropy of the Lozi map $L_{a,b}$ is zero, $h_{top}(L_{a,b}) = 0$, in the 
	following three regions of the parameter space: 
	\begin{enumerate}[(i)]
		\item $0 < |b| \le 1$ and $a \le b - 1$
		
		In that region $L_{a,b}$ does not have fixed or periodic points.
		
		\item $0 < b < 1$ and $b - 1 < a \le 1 - b$
		
		In that region, and if $a < 1 - b$, $L_{a,b}$ has a unique 
		attracting fixed point $X$ in the first quadrant, and does not 
		have other periodic points. 
		If $a = 1 - b$, $L_{a,b}$ has the fixed point $X$, which is not 
		hyperbolic, and it is the midpoint of a line segment $I$ of 
		period-two points, where
		$I = \{ (x, y) \in \R^2 : bx + y = \frac{b}{1-b}, \ 0 \le x \le 
		\frac{1}{1-b} \}$.
		
		\item In a small neighborhood $U$ of the point $(a, b) = (1, 0.5)$.
	\end{enumerate}

The results (i) and (ii) are proved in \cite{IS} by Ishii and Sands, 
and the result (iii) in \cite{Y1} by Yildiz.
	
	In this paper, we improve the zero entropy results. 
	
	In the next section we first define a region in the parameter space that we 
	denote by $R$, see Figure \ref{fig.R}.
	\begin{figure}[ht]
		\centering
		\includegraphics[width=12.0cm,height=6.0cm]{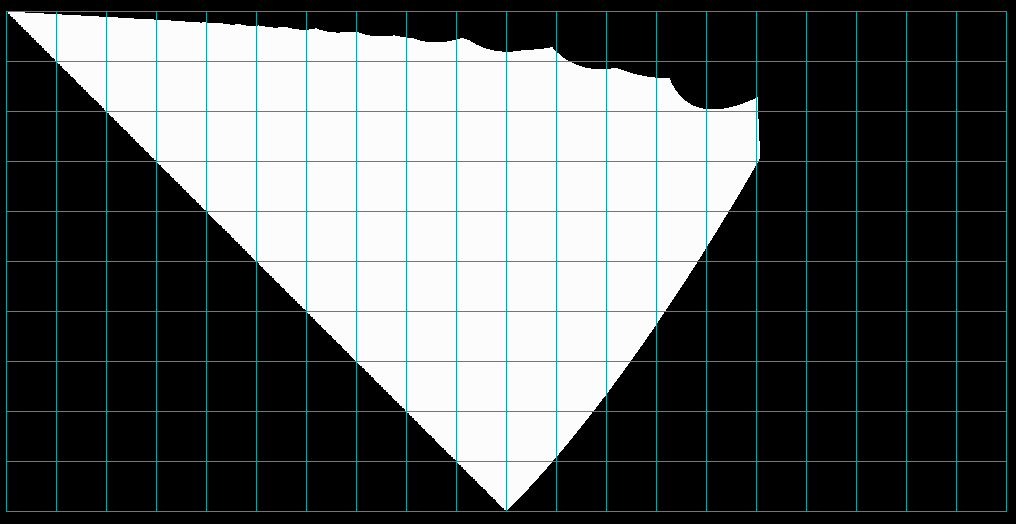}
		\caption{The parameter $a$ is on the horizontal axis and goes from 0 to 2. 
			The parameter $b$ is on the vertical axis and goes from 0 to 1. The 
			region $R$ is in white. For more details about the boundary curves of $R$
			see Example \ref{ex:pc} and Figures \ref{fig.Ci} and \ref{fig.LR8}.}
		\label{fig.R}
	\end{figure}
	Then we prove the following theorem:
	\begin{theorem}\label{t:zel} For $(a,b) \in R$, $h_{top}(L_{a,b})=0$.
	\end{theorem}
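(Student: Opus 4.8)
The plan is to push the entropy down onto a small invariant set and then to read it off from a symbolic description. First I would fix a bounded quadrilateral $\Delta$ (as in Misiurewicz's construction, with vertices among the fixed point $X$ and its successive preimages along the unstable direction) with $L_{a,b}(\Delta)\subset\Int\Delta$; then $\Lambda_{a,b}:=\bigcap_{n\ge0}L_{a,b}^{\,n}(\Delta)$ is the maximal invariant set in $\Delta$, and it captures all nontrivial recurrence, because every point of $\R^2$ either escapes to infinity under forward iteration or eventually enters $\Delta$ — the other fixed point, when it is present, is a saddle whose stable set separates these two behaviors and carries no entropy. Hence $h_{top}(L_{a,b})=h_{top}\bigl(L_{a,b}|_{\Lambda_{a,b}}\bigr)$, and it suffices to show the latter vanishes. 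Equivalently, since $L_{a,b}$ is piecewise affine with the single fold line $\{x=0\}$, one may instead use a Misiurewicz--Szlenk type identity $h_{top}(L_{a,b})=\lim_n\frac1n\log c_n$, where $c_n$ counts the affine pieces of the broken line $L_{a,b}^{\,n}(J_0)$ for a suitable fundamental arc $J_0$ of the unstable manifold; then zero entropy is the assertion that this broken line folds only subexponentially often.

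The heart of the argument is combinatorial. Code points of $\Lambda_{a,b}$ by their itineraries with respect to the partition $\{x<0\},\{x>0\}$ (symbols $L,R$); the folding pattern of the broken line, equivalently the admissibility of finite $L,R$-words, is governed by the itineraries of the finitely many \emph{turning points} of $\Lambda_{a,b}$, i.e.\ the forward images of the origin lying on the boundary of a fundamental domain. The region $R$ is, by construction (Example~\ref{ex:pc} and Figures~\ref{fig.Ci},~\ref{fig.LR8}), exactly the set of parameters for which these kneading itineraries have a prescribed low-complexity form — each boundary curve $C_i$ records where the itinerary of a turning point changes a symbol, and the block attached to $LR8$ is the extremal admissible word. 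Invoking the coding lemma (that $L_{a,b}|_{\Lambda_{a,b}}$ is an almost one-to-one extension of the subshift $S_{a,b}$ of admissible itineraries, so the two have equal entropy), the theorem reduces to: for $(a,b)\in R$ the language of $S_{a,b}$ has subexponential word growth. I would prove this by showing that the kneading inequalities defining $R$ force every admissible word to be a subword of one produced from the period-two block $(LR)^\infty$ by a bounded number of controlled insertions dictated by the turning-point itineraries; thus $S_{a,b}$ is an extension of an adding machine (or of a finite union of periodic orbits), the number of admissible words of length $n$ grows only polynomially, and $h_{top}(S_{a,b})=0$.

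The main obstacle, I expect, is precisely this last step: translating the geometric conditions that cut out $R$ — that $X$ has no homoclinic point, and that the unstable manifold folds only inside the region delimited by the curves $C_i$ while spiralling onto the attracting period-two orbit — into a clean combinatorial normal form for the admissible language, and then bounding its growth uniformly over $R$. A subsidiary technical difficulty is the coding lemma itself: verifying that the itinerary map is injective off a set that is simultaneously meagre and of zero measure so that no entropy leaks out, and dealing with the exceptional orbits (the fixed points and the period-two orbit) by hand. Once the language is under control, the chain $h_{top}(L_{a,b})=h_{top}\bigl(L_{a,b}|_{\Lambda_{a,b}}\bigr)=h_{top}(S_{a,b})=0$ closes the argument.
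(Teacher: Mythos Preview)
Your route is genuinely different from the paper's and, as written, has a real gap. The paper does not use symbolic dynamics, kneading theory, or any growth-rate counting at all. Its proof is purely topological: from the defining property of $R$ (that $Z^{2k}$ stays in the right half-plane and $Z^{2k-1}$ in the left), it follows that $W^u_X$ spirals onto the attracting $2$-cycle $\{P,P'\}$ without ever recrossing the $y$-axis, so $\Cl W^u_X=W^u_X\cup\{P,P'\}$ is compact and connected. Adjoining the upward half $W^{s+}_X$ of the stable manifold, the downward half $W^{u+}_Y$ of the unstable manifold of $Y$, and the point at infinity gives a compact connected invariant set $M$ in the extended plane that does not separate it. The complement $U=\R^2\setminus M$ is then a simply connected open invariant set containing no fixed point of $L_{a,b}^2$, so the Brouwer plane translation theorem makes every point of $U$ wandering for $L_{a,b}^2$; hence the non-wandering set of $L_{a,b}^2$ is the finite set $\{X,Y,P,P'\}$ and $h_{top}(L_{a,b})=0$.

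Against this, your plan leaves its central step undone. You yourself flag the combinatorial normal form for the admissible language as ``the main obstacle,'' and nothing in the proposal actually establishes subexponential word growth; the assertion that every admissible word is a bounded perturbation of $(LR)^\infty$ is exactly the content of the theorem, not an argument for it. There are also setup issues: Misiurewicz's forward-invariant triangle is built for the hyperbolic-attractor region $\mathcal{M}$, not for $R$, and you would need to construct a trapping region adapted to the attracting $2$-cycle before invoking it; the Misiurewicz--Szlenk growth formula you cite requires justification in this piecewise-affine planar setting; and ``$LR8$'' is the label of a figure (the region cut out by eight half-plane conditions), not a symbolic block. None of these is fatal in principle, but the Brouwer argument bypasses all of them in a few lines by converting the geometric definition of $R$ directly into ``$L_{a,b}^2$ is fixed-point free on a topological plane,'' which is the decisive idea you are missing.
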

	After that we describe rigorously the region $R$ and show that it looks 
	as in Figure \ref{fig.R}.
	
	In Section 3 we present a few additional numerical results.
	
	\section{The zero entropy locus}\label{sec:zel}
	
	Let us consider the following region in the parameter space: $0 < b < 1$ 
	and $1 - b < a < 1 + b$. In that region $L_{a,b}$ has two saddle fixed 
	points, $X = (\frac{1}{1+a-b}, \frac{b}{1+a-b})$ in the first quadrant 
	and $Y = (\frac{1}{1-a-b}, \frac{b}{1-a-b})$ in the third quadrant. The 
	eigenvalues of $DL_{a,b}$ (the derivative of $L_{a,b}$) are 
	\begin{equation*}\label{e:X}
		\lambda^u_X = (-a - \sqrt{a^2 + 4b})/2 \ \textrm{ and } \ 
		\lambda_X^s = (-a + \sqrt{a^2 + 4b})/2
		\ \textrm{ at } \ X
	\end{equation*}
	and 
	\begin{equation*}\label{e:Y}
		\lambda_Y^u = (a + \sqrt{a^2 + 4b})/2 \ \textrm{ and } \ 
		\lambda_Y^s = (a - \sqrt{a^2 + 4b})/2
		\ \textrm{ at } \ Y.
	\end{equation*}
	The eigenvector corresponding to an eigenvalue $\lambda$ is 
	$\lambda \choose b$. 
	
	From now on, for a point $P \in \mathbb{R}^2$, let $P^n = L_{a,b}^n(P)$, 
	$n \in \Z$, where $P^0 = P$. One half of the unstable manifold $W^u_X$ 
	of $X$, starts from $X$ and goes to the right, we denote it by 
	$W^{u+}_X$. It intersects the horizontal axis for the first time 
	at the point
	$$Z = \left( \frac{2 + a + \sqrt{a^2 + 4b}}{2(1+a-b)}, 0 \right).$$
	The other half, that starts at $X$ and goes to the left, we denote 
	by $W^{u-}_X$. It intersect the $y$-axis for the first time at the 
	point $Z^{-1}$. Note that $Z^{2n} \in W^{u+}_X$ and 
	$Z^{2n-1} \in W^{u-}_X$ for every $n \in \N_0$. 
	
	Also, there are two attracting period-two points, $P'$ in the 
	second quadrant and $P$ in the fourth quadrant,
	$$
	P'=\left(\frac{1-a-b}{a^2+(1-b)^2},\frac{b(1+a-b)}{a^2+(1-b)^2}\right), 
	\ P=\left(\frac{1+a-b}{a^2+(1-b)^2},\frac{b(1-a-b)}{a^2+(1-b)^2}\right),
	$$ 
	and there are no other period-two points. Also, for $a = 1 + b$, $P'$ 
	and $P$ are not hyperbolic, but for $a > 1 + b$, they are hyperbolic 
	saddle points. 
	
	Note that it is enough to consider the Lozi maps with $|b| \le 1$ since 
	the maps with $|b| > 1$ are, up to an affine conjugacy, inverses of the 
	maps with $|b| < 1$. 		
	
	From now on let $R$ denote the region in the parameter space such 
	that $0 < b < 1$, $1 - b < a < 1 + b$ and for every $k \in \N_0$, 
	$Z^{2k-1}$ belongs to the left half-plane and $Z^{2k}$ belongs to 
	the right half-plane.
	
	\begin{proof}[Proof of Theorem \ref{t:zel}]
		Let $(a,b) \in R$. We want to prove that $h_{top}(L_{a,b}) = 0$. 
		Let $ZL := \{ L_{a,b}^{2k-1}(Z) : k \in \N_0 \}$ and	
		$ZR := \{ L_{a,b}^{2k}(Z) : k \in \N_0 \} =  L_{a,b}(ZL)$. 
		Let us consider their convex hulls $\Conv (ZL)$ and 
		$\Conv (ZR)$. Since $L_{a,b}$ maps the left half-plane to the 
		lower half-plane, and the right half-plane to the upper one, 
		$\Conv (ZL)$ is contained in the second quadrant and 
		$\Conv (ZR)$ is contained in the fourth quadrant. Also 
		$L_{a,b} (\Conv (ZL)) = \Conv (ZR)$ and 
		$L_{a,b} (\Conv (ZR)) \subset \Conv (ZL)$. In both sets, 
		$\Conv (ZL)$ and $\Conv (ZR)$, the map is globally linear, so 
		their union is attracted to the periodic orbit 
		$\{ P', P \}$. This in particular means that 
		$W^u(X) \cup \{P', P\}$ is compact, connected and invariant. 
		
		Let us denote by $W^{s+}_X$ the upper connected component of 
		the stable manifold of $X$ which starts at $X$ and goes up, 
		and by $W^{u+}_Y$ the lower connected component of the 
		unstable manifold of the other fixed point $Y$ in the third 
		quadrant which starts at $Y$ and goes down. Let us denote 
		by $M$ the union of $W^u_X$, $P'$, $P$, $W^{s+}_X$, 
		$W^{u+}_Y$ and $\infty$, 
		$M = W^u_X \cup \{ P', P \} \cup W^{s+}_X \cup W^{u+}_Y 
		\cup \{ \infty \}$. Then $M$ is invariant, compact, connected 
		in the extended plane and does not separate the extended 
		plane, nor the plane. 
		
		Let us denote by $U$ the complement of $M$ in the plane (that 
		is, $U=\R^2\setminus M$). Then $U$ is invariant by construction 
		and does not contain any fixed point of $L_{a,b}^2$. Also, $U$ 
		is open and simply connected in the plane and therefore 
		homeomorphic to the open unit disc, and moreover to the plane. 
		The Brouwer plane translation theorem (BPTT) says that if $h$ 
		is an orientation preserving homeomorphism of the plane which 
		is fixed point free, then every point of the plane is contained 
		in a properly embedded line $l$ such that $l$ does not intersect 
		$h(l)$, and $l$ is separating $h(l)$ from $h^{-1}(l)$. In our 
		case, $L_{a,b}^2|_U$ satisfies assumptions of BPTT and therefore 
		every point of $U$ is a wandering point for $L_{a,b}^2$. This 
		means that the non-wandering set of $L_{a,b}^2$ consists only 
		of the fixed points of $L_{a,b}^2$, and hence 
		$h_{top}(L_{a,b}^2) = 2h_{top}(L_{a,b}) = 0$. 
	\end{proof}
	
	Now, we will describe rigorously the region $R$ and show that it looks 
	as in Figure \ref{fig.R}. First, we will prove that there is 
	a simply connected neighborhood of $(0, 1)$ contained in $R$. 
	
	Note that for $(a,b) \in R$ and every $k \in \N_0$, 
	$DL^2_{a,b}(Z^{2k}) = DL^2_{a,b}(P)$ and its eigenvalues are 
	$(-a^2 + 2b \pm a\sqrt{a^2 - 4b})/2$. Therefore, when $4b > a^2$, 
	the part of the unstable manifold of $X$ which belongs to the 
	fourth quadrant spirals towards $P$, and the part which belongs to 
	the second quadrant spirals to $P'$. In the next lemma, we will 
	describe that behavior more precisely. 
	
	We will work with parameters $a,b$, where $b = b_t(a) = 1-ta$, $0<t<1$ 
	and $a$ is close to $0$ (so $b$ is close to $1$). When we take limits 
	as $a\to 0$, we will fix $t$ and treat $b$ as a function of $a$.
	We identify $\R^2$ with $\C$, and use both notations (even within the
	same formula).
	
	The derivative of the second iterate of the Lozi map at the periodic 
	point $P$ of period 2, which is in the fourth quadrant, is
	\[
	A =
	\begin{pmatrix}
		b-a^2 & a\\
		-ab & b
	\end{pmatrix}
	=
	\begin{pmatrix}
		1-ta-a^2 & a\\
		-a+ta^2 & 1-ta
	\end{pmatrix}.
	\]
	
	\begin{lem}\label{l1}
		Let $v\in\C=\R^2$. Then, as $a\to 0$, the broken line joining
		consecutive points $A^nv$ converges to the logarithmic spiral
		$\phi\mapsto ve^{-t\phi}e^{-i\phi}$ locally uniformly in $t$.
	\end{lem}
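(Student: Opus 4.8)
The plan is to diagonalise the matrix $A$ over $\C$ and reduce the lemma to elementary asymptotics for the scalar sequence $\bar\mu^n$, where $\mu$ is an eigenvalue of $A$.

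First I would record the spectral data. The eigenvalues of $A$, computed above, are $\mu,\bar\mu$ with $\mu=(2b-a^2+ia\sqrt{4b-a^2})/2$ once $a$ is small enough that $a^2<4b$; then $|\mu|^2=b^2$, so $\mu=be^{i\theta}$ with $\cos\theta=1-a^2/(2b)$, and $b=1-ta<1$. Since $\det A=b^2>0$ and the image of the positive real axis under $A$, namely $(b-a^2,-ab)$, lies in the open fourth quadrant, $A$ turns clockwise, hence is $\R$-linearly conjugate to multiplication by $\bar\mu=be^{-i\theta}$. An explicit conjugacy is $A=\Psi_a M_{\bar\mu}\Psi_a^{-1}$, where $\Psi_a\colon\C\to\C$ is the real-linear map whose matrix has columns $(2,a)$ and $(0,\sqrt{4b-a^2})$, the real and imaginary parts of the eigenvector $(2,\,a+i\sqrt{4b-a^2})$ of $\mu$. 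As $a\to0$ (so $b\to1$), $\Psi_a\to2\,\mathrm{id}$ and $\Psi_a^{-1}\to\tfrac12\mathrm{id}$ in operator norm, locally uniformly in $t$.

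Next I would show $A^nv=\bar\mu^n v+o(1)$ as $a\to0$, \emph{uniformly in all} $n\ge0$. Since $M_{\bar\mu}$ commutes with complex scalars, $A^nv=\Psi_a(\bar\mu^n\Psi_a^{-1}v)$; as $|\bar\mu^n|=b^n\le1$, the point $\bar\mu^n\Psi_a^{-1}v$ stays in a fixed ball, so writing $\Psi_a=2\,\mathrm{id}+(\Psi_a-2\,\mathrm{id})$ and using $\|\Psi_a-2\,\mathrm{id}\|\to0$ and $2\Psi_a^{-1}v\to v$ yields $A^nv=\bar\mu^n(2\Psi_a^{-1}v)+o(1)=\bar\mu^n v+o(1)$, the error uniform in $n$ because $|\bar\mu^n|\le1$.

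It remains to analyse $\bar\mu^n v=b^n e^{-in\theta}v$, where the real content lies. Parametrising the broken line so that its $n$-th vertex $A^nv$ sits at $\phi=\phi_n:=n\theta$, I must show $b^n e^{-i\phi_n}=e^{-t\phi_n}e^{-i\phi_n}(1+o(1))$ uniformly for $\phi_n$ in a fixed interval $[0,\Phi]$, i.e.\ that $n(\ln b+t\theta)\to0$ there. A short expansion gives $\theta=\tfrac{a}{\sqrt b}(1+O(a^2))$ and $\ln b=-ta-\tfrac12(ta)^2+O(a^3)$, and the crucial fact is the cancellation
\[
\ln b+t\theta=ta\left(b^{-1/2}-1-\tfrac{ta}{2}\right)+O(a^3)=ta\cdot O(a^2)+O(a^3)=O(a^3),
\]
using $b^{-1/2}=(1-ta)^{-1/2}=1+\tfrac{ta}{2}+O(a^2)$. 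Since $\theta\sim a$ forces $n\le\Phi/\theta=O(1/a)$ on $[0,\Phi]$, we get $n(\ln b+t\theta)=O(a^2)\to0$, all bounds uniform for $t$ in compact subsets of $(0,1)$. Thus the vertices $A^nv$ converge to the spiral points $v e^{-t\phi_n}e^{-i\phi_n}$ uniformly on compacts, and since the mesh $\theta\to0$ and the spiral is uniformly continuous on compacts, the linear interpolation between consecutive vertices stays within $o(1)$ of the spiral as well; this gives the asserted convergence of the broken line. The one delicate point is the displayed cancellation — it is exactly what makes the modulus $b^n$ track $e^{-t\phi}$, and it relies on the relation $b=1-ta$; everything else is bookkeeping, with care only that errors are uniform in $n$ and locally uniform in $t$.
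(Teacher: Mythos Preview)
Your proof is correct, but the route is genuinely different from the paper's. The paper never diagonalises $A$; instead it introduces an auxiliary matrix
\[
B=\begin{pmatrix} b\cos a & b\sin a\\ -b\sin a & b\cos a\end{pmatrix},
\]
shows $\|A-B\|\le Ka^2$ locally uniformly in $t$, and then uses the telescoping identity $A^n-B^n=\sum_{j=0}^{n-1}B^j(A-B)A^{n-1-j}$ together with $\|A\|,\|B\|<1$ to get $\|A^n-B^n\|\le nKa^2\le \phi Ka$ for $n\le\phi/a$. Since $B^n$ acts as multiplication by $b^ne^{-ina}$, the remaining step is the elementary limit $(1-ta)^{\lfloor\phi/a\rfloor}\to e^{-t\phi}$.

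What each approach buys: the paper's trick of comparing with the rotation by angle exactly $a$ (rather than the true eigen-angle $\theta$) means the final limit is the classical $(1-x/n)^n\to e^{-x}$ with no cancellation to verify; the price is the telescoping estimate. Your exact diagonalisation via $\Psi_a\to 2\,\mathrm{id}$ makes the passage from $A^n v$ to $\bar\mu^n v$ immediate and uniform in $n$ (using $|\bar\mu|\le 1$), but you must then check the second-order cancellation $\ln b+t\theta=O(a^3)$, which is where the relation $b=1-ta$ genuinely enters. Both arguments are of comparable length; yours is a bit more conceptual, the paper's a bit more hands-on.
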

	
	\begin{proof}
		Let us define the matrix
		\[
		B=
		\begin{pmatrix}
			b\cos a & b\sin a\\
			-b\sin a & b\cos a
		\end{pmatrix}.
		\]
		Then
		\[
		\lim_{a\to 0}\frac{A-B}{a^2}=
		\begin{pmatrix}
			-1/2 & t\\
			0 & 1/2
		\end{pmatrix}.
		\]
		Moreover, for a given $\tau$, $0 < \tau < 1$, there exist $\eps > 0$, 
		$\alpha > 0$ and a constant $K := K_{\alpha, \eps}$ such that if 
		$\tau - \eps < t < \tau + \eps$ and $a < \alpha$, then 
		$\|A-B\|<Ka^2<ta$, where $\| \cdot \|$ denotes the operator norm 
		induced by 2-norm. Since $\|B\|=b$, we have always $\|B\|<1$ and 
		$\|A\|\le\|B\|+\|A-B\|< 1-ta+Ka^2<1$.
		
		We have
		\[
		A^n-B^n=(A^n-BA^{n-1})+(BA^{n-1}-B^2A^{n-2})+\dots+(B^{n-1}A-B^n),
		\]
		so for $\tau - \eps < t < \tau + \eps$ and $a < \alpha$
		\[\begin{split}
			\|A^n-B^n\|&\le\|A^n-BA^{n-1}\|+\|BA^{n-1}-B^2A^{n-2}\|+\dots+
			\|B^{n-1}A-B^n\|\\
			&\le\|A-B\|\big(\|A\|^{n-1}+\|B\|\cdot\|A\|^{n-2}+\dots+\|B\|^{n-1}
			\big)\le n\|A-B\|.
		\end{split}\]
		If $n\le\phi/a$ for some constant $\phi$, we get
		$\|A^n-B^n\|\le\phi Ka$.
		
		The action of the matrix $B$ is the same as the multiplication by
		$be^{-ai}$. Therefore, for $\tau - \eps < t < \tau + \eps$ and 
		$a < \alpha$, we have for any $v\in\R^2=\C$ and 
		$n=\lfloor\frac\phi{a}\rfloor$
		\[
		|A^nv-vb^ne^{-ina}| \le \phi Ka|v|.
		\]
		As $a\to 0$ (and $\tau - \eps < t < \tau + \eps$, 
		$n=\lfloor\frac\phi{a}\rfloor$), we get
		\[
		\lim_{a\to 0}b^ne^{-ina}=\lim_{a\to 0}(1-ta)^ne^{-i\phi}=
		\lim_{n\to\infty}\left(1-\frac{t\phi}n\right)^ne^{-i\phi}=
		e^{-t\phi}e^{-i\phi}.
		\]
		Therefore, as $a\to 0$, the broken line joining consecutive points
		$A^nv$ converges to the logarithmic spiral 
		$\phi\mapsto ve^{-t\phi}e^{-i\phi}$ locally uniformly in $t$.
	\end{proof}
	
	\begin{theorem}\label{t1}
		Let $t_0$ be the root of the equation
		\begin{equation}\label{e1}
			\sqrt{2}e^{-\frac74\pi t} = 1 + t
		\end{equation}
		($t_0 \approx 0.0535502597736068$). There exists a lower 
		semi-continuous function $a(t)$ such that if $t_0 < t < 1$, then for 
		$a < a(t)$ (and $b = 1 - ta$) all even images of the point $Z$ are 
		in the fourth quadrant, and all odd images of $Z$ are in the second 
		quadrant. As a consequence, the unstable manifold $W^u_X$ of the 
		fixed point $X$ is attracted to the periodic orbit $\{ P, P' \}$ of 
		period $2$ and $\Cl W^u_X = W^u_X \cup \{ P, P' \}$.
	\end{theorem}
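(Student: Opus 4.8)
The plan is to prove a \emph{perturbative} statement: for $a$ small the even orbit $\{Z^{2k}\}$ stays close to a single logarithmic spiral around $P$ — the one produced by Lemma~\ref{l1} — and $t>t_0$ turns out to be exactly the condition that this limiting spiral never leaves the open fourth quadrant.

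\textbf{Step 1 (reduction).} Let $\mathcal{D}:=\{(x,y):x>0,\ 1+y-ax<0\}$. On the half-plane $\{x>0\}$ the map $L_{a,b}$ is affine and carries $\mathcal{D}$ into the open second quadrant; on $\{x<0\}$ it is affine; hence $L_{a,b}^{2}$ is affine on $\mathcal{D}$ with linear part the matrix $A$ of the statement, and, since $L_{a,b}^{2}(P)=P$, one has $L_{a,b}^{2}(w)=P+A(w-P)$ for $w\in\mathcal{D}$. A direct check gives $P\in\mathcal{D}$, and $Z\in\mathcal{D}$ once $a$ is small, because $aZ_x\to z_0:=\frac{2}{1+t}>1$ as $a\to0$. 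It therefore suffices to prove that the points $w_k:=P+A^{k}(Z-P)$ satisfy $w_k\in\mathcal{D}$ for every $k\ge0$ and $w_k$ lies in the open fourth quadrant for every $k\ge1$: granting this, an easy induction yields $w_k=Z^{2k}$, then $Z^{2k+1}=L_{a,b}(Z^{2k})$ lies in the open second quadrant, which is the assertion of the theorem (and in particular places $(a,b)$ in $R$); the closing sentence about $W^{u}_X$ is then exactly what is established in the proof of Theorem~\ref{t:zel}. One passes to the rescaled points $aw_k=aP+A^{k}(aZ-aP)$, where $aP\to p_0:=\frac{(1+t)-i(1-t)}{1+t^{2}}$ (a point of the open fourth quadrant, at distance $\frac{1-t}{1+t^{2}}$ from its boundary), $aZ\to z_0$, and $\|A\|<1$ by the proof of Lemma~\ref{l1}.

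\textbf{Step 2 (the limiting spiral; why (\ref{e1}) appears).} Set $v_0:=z_0-p_0$, so that $\arg v_0=\frac{\pi}{4}+\arctan t$, $|v_0|=\frac{\sqrt{2}\,(1-t)}{(1+t)\sqrt{1+t^{2}}}$, and $v_0(-t-i)=\frac{1-t}{1+t}(1-i)$. Let $\gamma(\phi):=p_0+v_0e^{-t\phi}e^{-i\phi}$; by Lemma~\ref{l1} applied to $v=v_0$, together with $aZ-aP\to v_0$ and $\|A\|<1$, one obtains $aw_k=\gamma(ka)+O(a)$ uniformly for $ka$ in any fixed bounded interval, and $\gamma(\phi)\to p_0$. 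The core of the proof is the claim: \emph{for $t\in(t_0,1)$ the spiral $\gamma(\phi)$ lies in the open fourth quadrant for every $\phi>0$.} One computes $\im\gamma(\phi)=|v_0|\bigl(h(\phi)-h(0)\bigr)$ where $h(\phi):=e^{-t\phi}\sin(\arg v_0-\phi)$ and $h'(\phi)=-\sqrt{1+t^{2}}\,e^{-t\phi}\sin(\frac{3\pi}{4}-\phi)$; hence $h$ is decreasing on $[0,\frac{3\pi}{4}]$ and its consecutive local maxima, at $\phi=\frac{7\pi}{4}+2k\pi$, decrease strictly, so $\sup_{\phi>0}h=\max\{h(0),h(\frac{7\pi}{4})\}$. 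Since $h(0)=\frac{1+t}{\sqrt{2}\sqrt{1+t^{2}}}$ and $h(\frac{7\pi}{4})=\frac{e^{-7\pi t/4}}{\sqrt{1+t^{2}}}$, we get $\im\gamma(\phi)<0$ for all $\phi>0$ if and only if $\sqrt{2}\,e^{-7\pi t/4}<1+t$ — that is, exactly when $t>t_0$, with tangency to the positive real axis at $\phi=\frac{7\pi}{4}$ for $t=t_0$; this is where equation (\ref{e1}) comes from. The companion requirement $\re\gamma(\phi)>0$ for $\phi\ge0$ (whose worst case is at $\phi=\frac{5\pi}{4}$) reduces to $\sqrt{2}\,e^{-5\pi t/4}<\frac{(1+t)^{2}}{1-t}$, which on $(t_0,1)$ follows from the previous inequality together with the elementary $e^{\pi t/2}<\frac{1+t}{1-t}$; so it is the weaker of the two and imposes nothing new.

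\textbf{Step 3 (transport to the orbit; the obstacle).} Fix $t\in(t_0,1)$ and pick $\Phi$ with $\re\gamma>1$ on $[\Phi,\infty)$ and $|v_0|e^{-t\Phi}<\frac{1}{2}\rho_0$, where $\rho_0:=\frac{1}{4}\cdot\frac{1-t}{1+t^{2}}$. For small $a$ the ball $B(P,\rho_0/a)$ is contained in $\mathcal{D}\cap(\text{open fourth quadrant})$ and, since $\|A\|<1$, is forward invariant under $w\mapsto P+A(w-P)$; moreover the estimate of Step~2 puts $w_k$ inside this ball at step $k=\lceil\Phi/a\rceil$, and hence at all larger steps. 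For $k\le\lceil\Phi/a\rceil$ one argues in two regimes. First, for $k$ greater than a constant $N$ depending only on $t$: from $aw_k=\gamma(ka)+O(a)$, the bounds $\im\gamma(\phi)\le-c\min\{\phi,1\}$ and $\re\gamma(\phi)\ge m_0>0$ valid on $(0,\Phi]$ force $w_{k,y}\le-1<0<w_{k,x}$, whence $1+w_{k,y}-aw_{k,x}<0$, i.e.\ $w_k\in\mathcal{D}$, and $w_k$ lies in the open fourth quadrant. Second, for the finitely many $k\le N$: the point $w_k$ (which, once $a$ is small, equals $Z^{2k}$) is an explicit rational function of $a$ whose limit one computes from the recursion $w_{k+1}=P+A(w_k-P)$; using $aw_{k,x}\to z_0$ and the linearization $v_0(-t-i)=\frac{1-t}{1+t}(1-i)$ one finds $w_{k,y}\to-\frac{(1-t)k}{1+t}$ (equivalently $\ell_{k+1}=\ell_k-\frac{1-t}{1+t}$, $\ell_0=0$), so $1+w_{k,y}-aw_{k,x}\to-\frac{(1-t)(k+1)}{1+t}<0$ and $w_{k,y}\to-\frac{(1-t)k}{1+t}<0$ for $k\ge1$; these strict limits survive for $a$ small, uniformly over $k\le N$. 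Assembling the three regimes proves the claim; taking $a(t)$ to be the supremum of admissible thresholds, its lower semicontinuity follows because all the above estimates are uniform for $t$ in compact subsets of $(t_0,1)$ and, once the orbit reaches $B(P,\rho_0/a)$, only finitely many quadrant conditions remain in force. The delicate point — the main obstacle — is precisely this matching near $\phi=0$: the affinity region $\mathcal{D}$ of $L_{a,b}^{2}$ degenerates as $a\to0$, its boundary line $\{1+y-ax=0\}$ collapsing onto the real axis, which is exactly where $\im\gamma$ vanishes; so the spiral estimate alone does not keep a fixed finite initial block of iterates inside $\mathcal{D}$, and it is the separate finite computation together with the $O(a)$-strength of the error in $aw_k=\gamma(ka)+O(a)$ that make the argument go through. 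By contrast Step~2, although conceptually the heart of the theorem and the source of the curve (\ref{e1}), is routine trigonometry once one notices that the binding constraint sits at $\phi=\frac{7\pi}{4}$.
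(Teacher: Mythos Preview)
Your proof is correct and follows essentially the same route as the paper: both reduce to showing that the limiting logarithmic spiral of Lemma~\ref{l1} stays in the open fourth quadrant precisely when $t>t_0$, locate the binding tangency to the $x$-axis at $\phi=\tfrac{7\pi}{4}$ (which produces equation~\eqref{e1}), verify that the companion $y$-axis constraint (your $\phi=\tfrac{5\pi}{4}$, the paper's $t_1$) is strictly weaker, and then invoke the spiral approximation to carry the conclusion over to the actual orbit $\{Z^{2k}\}$ for small $a$, with lower semicontinuity coming from local uniformity in $t$.

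The one place where you go further is Step~3. The paper dispatches the transfer in a single sentence (``the broken line \ldots\ is so close to the spiral that all even images of $Z$ are in the fourth quadrant''), whereas you correctly flag that this is delicate near $\phi=0$: the spiral meets the $x$-axis there, the affinity region $\mathcal{D}$ of $L_{a,b}^{2}$ has a boundary collapsing onto $\{y=-1\}$, and the bare $O(a)$ error in $aw_k=\gamma(ka)+O(a)$ does not by itself keep the first few iterates inside. Your explicit computation $w_{k,y}\to-\tfrac{(1-t)k}{1+t}$ for fixed $k$, combined with the contraction argument for the tail, closes this gap cleanly. This is a genuine refinement of detail rather than a different method.
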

	
	\begin{proof}
		Fix $t < 1$. Let us consider the spiral described in Lemma~\ref{l1}, 
		with center at $P$ and starting at $Z$. Note that the points $P$ 
		and $Z$ (as well as $X$, $P'$, $Z^k$, $k \in \N$) go to infinity as 
		$a$ approaches zero. For that reason we will scale these points by 
		the factor $a$. In this case the scaled points converge when $a$ goes 
		to zero, as we will see below. Moreover, the below statements about 
		spirals hold if and only if the analogous statements hold for the 
		scaled points. 
		
		The points $P$ and $Z$, scaled by the factor $a$, have the following 
		forms:
		$$aP = \left(\frac{1+t}{1 + t^2}, \ \frac{(1 - ta)(t-1)}{1 + t^2}\right), 
		\ aZ = \left( \frac{2 + a + \sqrt{a^2 + 4 - 4ta}}{2(1 + t)}, \ 0 \right).
		$$ 
		The slope of the segment between $Z$ and $Z^1$, that is the part of the 
		unstable manifold of $X$ which contains $X$, is
		$(-a - \sqrt{a^2 + 4b})/2 = (-a - \sqrt{a^2 + 4 - 4ta})/2$, so it 
		converges to $-1$ when $a$ goes to $0$. Also, as $a$ goes to zero, the 
		linear maps defining our Lozi map on two half-planes go to the same limit. 
		Therefore, the angle between the segments $[Z^1, Z]$ and $[Z, Z^2]$, as 
		well as between the segments $[aZ^1, aZ]$ and $[aZ, aZ^2]$, goes to zero. 
		Hence, in the limit the tangent to our spiral at $aZ$ has slope $-1$. 	
		
		First, we will show that if $t = t_0$ then the spiral is tangent to the 
		$x$-axis. Thus, if $t > t_0$, it does not intersect the $x$-axis. 
		Similarly, if $t = t_1$, where $t_1$ is the root of the equation
		\begin{equation}\label{e2}
			\sqrt{2} e^{-\frac{5\pi}{4}t} = \frac{(1 + t)^2}{1 - t},
		\end{equation}
		then the spiral is tangent to the $y$-axis, so if $t  > t_1$, it does not
		intersect the $y$-axis. However, $t_1 < t_0$, so if $t > t_0$ then our
		spiral does not intersect any of the coordinate axes.
		
		Now we will make calculations in $\C$. Set
		\[
		p = \lim_{a \to 0}aP = \frac{t + 1}{t^2 + 1} + i\frac{t - 1}{t^2 + 1},
		\ \ \ \ \ z = \lim_{a \to 0}aZ = \frac2{t + 1}.
		\]
		To get the tangent line to the spiral to change from slope $-1$ to slope 
		$0$ (above $p$), we have to go along the spiral by the angle $\frac74\pi$. 
		This means that $t_0$ is the root of the equation
		\begin{equation}\label{e3}
			\im\left(p + (z - p)e^{-t \frac74 \pi}e^{-i \frac74 \pi}\right) = 0.
		\end{equation}
		Similarly, $t_1$ is the root of the equation
		\begin{equation}\label{e4}
			\re\left(p + (z - p)e^{-t \frac54 \pi}e^{-i \frac54 \pi}\right) = 0.
		\end{equation}
		
		Equation~\eqref{e3} can be rewritten as
		\[
		\im\left((z - p)e^{-i \frac74 \pi}\right) = -\im(p)e^{t \frac74 \pi}.
		\]
		We have
		\[
		z - p = \frac{(t - 1)^2}{(t + 1)(t^2 + 1)} - i\frac{t - 1}{t^2 + 1},
		\]
		so
		\[
		\frac{z - p}{\im(p)} = \frac{t - 1}{t + 1} - i.
		\]
		Moreover, $e^{-i \frac74 \pi} = \frac{\sqrt{2}}2(1 + i)$.
		Therefore,~\eqref{e3} becomes
		\[
		\frac{\sqrt{2}}2\im\left(\left(\frac{t-1}{t+1}-i\right)(1+i)\right)
		= -e^{t \frac74 \pi},
		\]
		which is equivalent to~\eqref{e1}.
		
		Similar calculations show that~\eqref{e4} is equivalent to~\eqref{e2}.
		The solution is approximately $t_1\approx 0.05019615992097643$, so
		$t_1<t_0$.
		
		Fix $\tau, \ t_0 < \tau < 1$. As we have already shown, our spiral (with 
		center at $P$ and starting at $Z$) does not intersect the coordinate 
		axes (except at the initial point $Z$). By Lemma~\ref{l1}, there exist 
		$\alpha > 0$ and $\eps > 0$ such that for $a < \alpha$ and 
		$\tau - \eps < t < \tau + \eps$, the broken line joining the consecutive 
		even images of the point $Z$ is so close to the spiral that all even 
		images of the point $Z$ are in the fourth quadrant. Let us denote by 
		$S(\tau)$ the set of all such $\alpha$ for which there exists $\eps > 0$ 
		that satisfies the above property. We define $a(\tau) = \sup S(\tau)$. 
		Recall that the function $f$ is called lower semi-continuous at $x_0$ if 
		for every $y < f(x_0)$ there exists a neighborhood $U$ of $x_0$ such that 
		$f(x) > y$ for all $x \in U$. Clearly, the map $\tau \mapsto a(\tau)$ is 
		lower semi-continuous and if $t_0 < \tau < 1$, then for $a < a(\tau)$ (and 
		$b = 1 - \tau a$) all even images of the point $Z$ are in the fourth 
		quadrant. Consequently, all odd images of $Z$ are in the second quadrant, 
		the unstable manifold $W^u_X$ of the fixed point $X$ (which then consists 
		of a segment in the first quadrant, joins consecutive even images of $Z$ 
		in the fourth quadrant and joins consecutive odd images of $Z$ in the 
		second quadrant) is attracted to the periodic orbit $\{P,P'\}$ of period 
		$2$ and $\Cl W^u_X = W^u_X \cup \{ P, P' \}$.
	\end{proof}
	
	Figure \ref{fig.us} shows the stable and unstable manifolds of the fixed 
	point $X$ for the Lozi map with the parameters $a = 0.05$ and $b = 0.997$.
	\begin{figure}[ht]
		\centering
		\includegraphics[width=9.5cm,height=7.5cm]{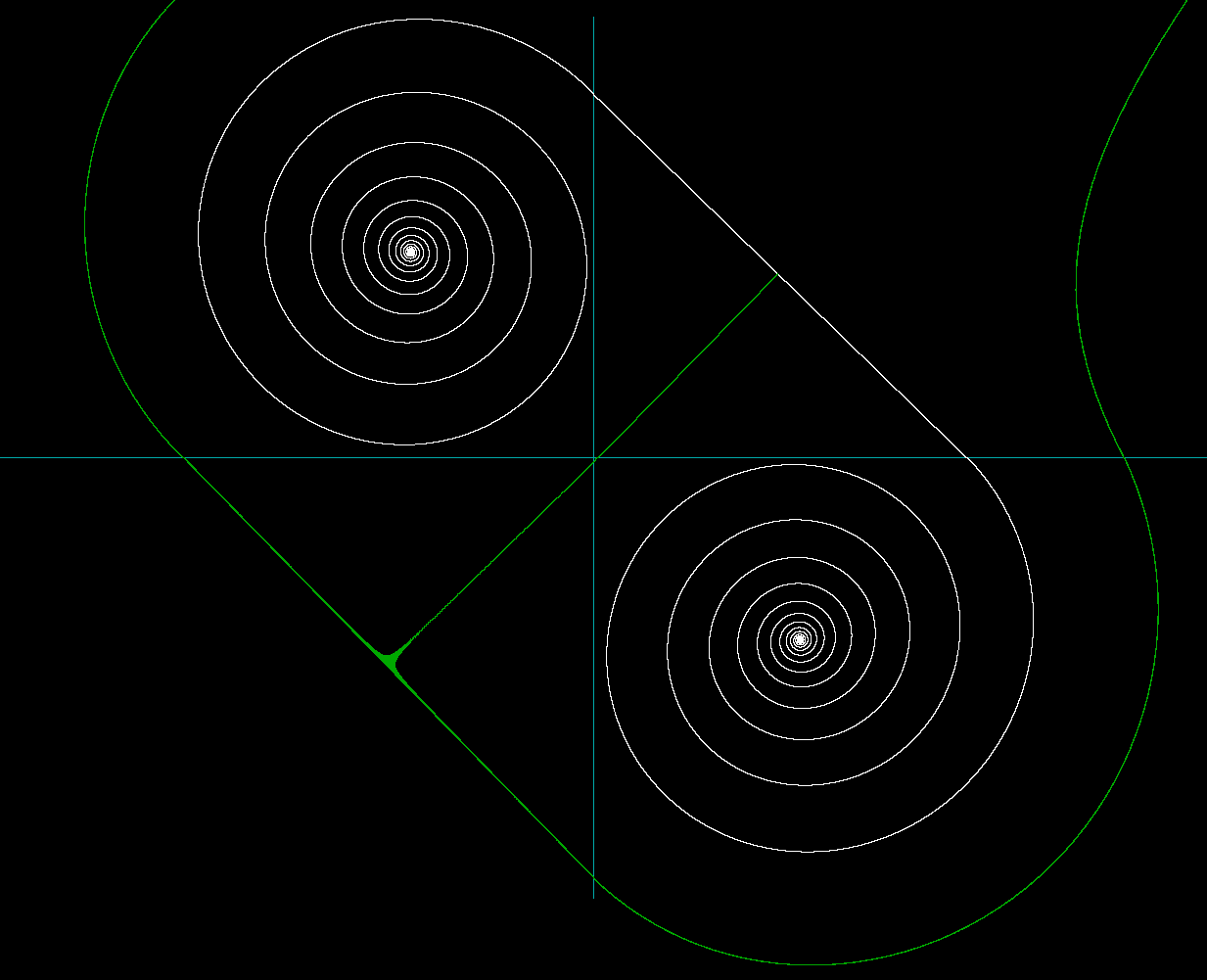}
		\caption{The horizontal and vertical lines are the $x$- and $y$-axes of 
			the phase space, respectively. The unstable manifold of the fixed 
			point $X$ for the Lozi map with parameters $a = 0.05$ and $b = 0.997$ 
			is in white (the spiral) and the stable manifold is in green (the curve 
			that intersects the spiral at one point).}
		\label{fig.us}
	\end{figure}
	
	Since the boundary of the region $R$ is a result of intersections of the 
	coordinate axes and the unstable manifold of $X$ which is piecewise linear, 
	the boundary of $R$ is piecewise algebraic. The boundary curves are given 
	by the following conditions: Let $x_n$ and $y_n$ denote the $x$-
	and $y$-coordinate of $Z^n$, that is, $Z^n = (x_n, y_n)$, $n \in \N$. First 
	recall that $Z$ belongs to the right half-plane and $Z^1$ belongs to the left 
	half-plane for every $0 < b < 1$ and $a > 1 - b$. Let us denote by $C_n$, 
	$n \ge 2$, the curve in the parameter space such that $Z^{2k}$ belongs to
	the right half-plane for $2k < n$, $Z^{2k+1}$ belongs to the left half-plane 
	for $2k+1 < n$, and $Z^n \in y$-axis. Then
	$$C_2 = \{ (a, b) : 0 \le b \le 1, \ 1 - b \le a \le 1 + b, \ x_2 = 0 \},$$ 
	and for every $k \in \N$, 
	$$C_{2k+1} = \{ (a, b) : 0 \le b \le 1, \ 1 - b \le a \le 1 + b, \ x_{2j-1} 
	\le 0, \ x_{2j} \ge 0, \ j = 1, \dots k, \ x_{2k+1} = 0 \},$$
	$$C_{2k+2} = \{ (a, b) : 0 \le b \le 1, \ 1 - b \le a \le 1 + b, \ x_{2j} 
	\ge 0, \ x_{2j+1} \le 0, \ j = 1, \dots k-1, \ x_{2k+2} = 0 \}.$$
	
	\begin{ex}\label{ex:pc}
		By direct calculation one can get the following equations of 
		$C_2$, $C_3$, $C_4$ and $C_5$.
		\begin{enumerate}
			\item[$C_2$:] $b^2 + (-2a^2 + a + 1)b - 2a^3 + a^2 + 3a + 1 = 0$
			\item[$C_3$:] $b^2 - (4a^2 - 2)b + 2a^4 - 3a^2 + 1 = 0$ (only 
			the right branch is important) and $b + a - 1 = 0$	
			\item[$C_4$:] $b^4 - (8a^2 - a - 1)b^3 - 
			(-8a^4 + 8a^3 + a^2 - 2a - 1)b^2 - 
			(2a^6 - 8a^5 + 4a^4 + 9a^3 - 3a^2 - 6a - 1)b - 2a^7 + 2a^6 + 4a^5 
			- 3a^4 - 5a^3 + a^2 + 3a + 1 = 0$ (only the upper branch is important)
			\item[$C_5$:] $b^4 - (12a^2 - 2)b^3 - (-22a^4 + 15a^2 - 3)b^2 - 
			(12a^6 - 16a^4 + 12a^2 - 2)b + 2a^8 - 4a^6 + 5a^4 - 3a^2 - 1 = 0$
			(only the upper branch is important) 
		\end{enumerate}
	The graphs of curves that satisfy the above equations are given in Figure \ref{fig.Ci}.
	The region obtained by 8 assumptions: $Z^2$, $Z^4$, $Z^6$, $Z^8$ belong to the right 
	half-plane, and $Z^3$, $Z^5$, $Z^7$, $Z^9$ belong to the left half-plane, is given in
	Figure \ref{fig.LR8}.
		\begin{figure}[ht]
			\centering
			\includegraphics[width=12.0cm,height=6.0cm]{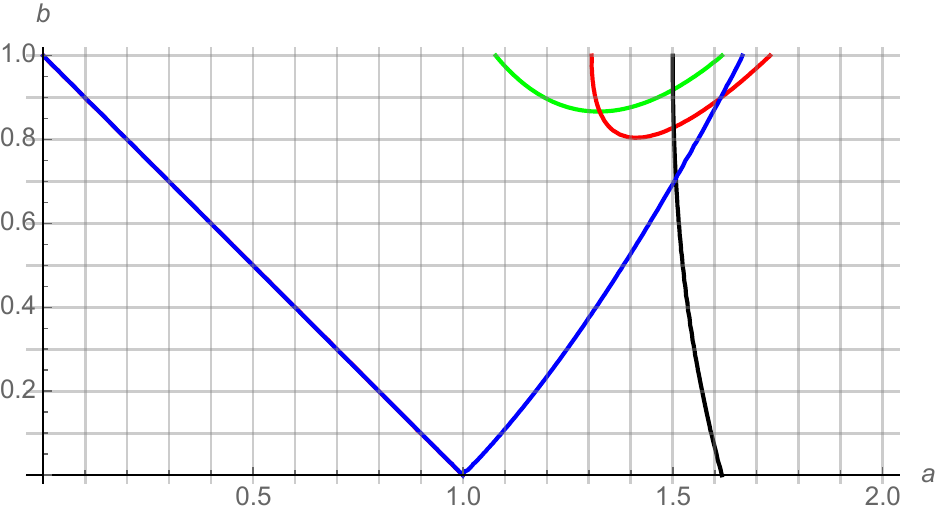}
			\caption{$C_2$ is black, $C_3$ is blue, $C_4$ is green and $C_5$ 
				is red.}
			\label{fig.Ci}
		\end{figure}
	\begin{figure}[ht]
		\centering
		\includegraphics[width=12.0cm,height=6.0cm]{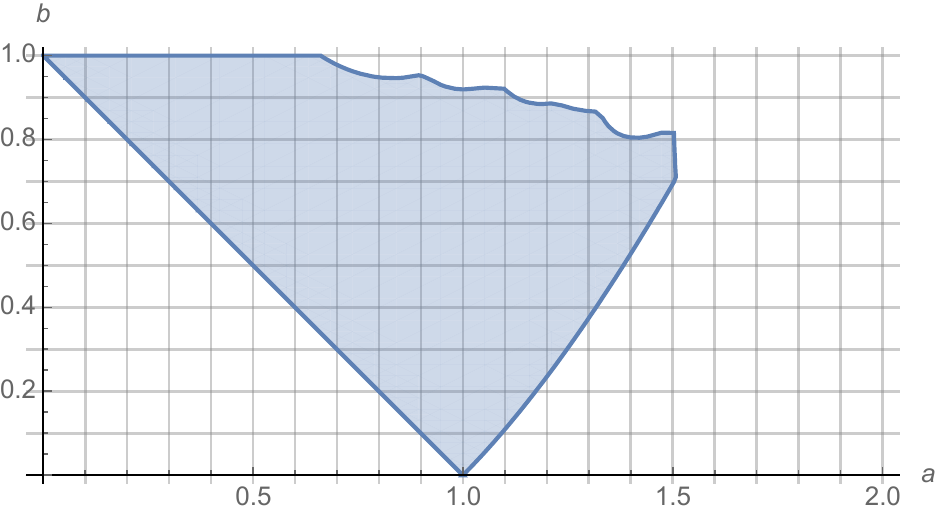}
		\caption{The region obtained by 8 assumptions: $Z^2$, $Z^4$, $Z^6$ and $Z^8$ 
			belong to the right half-plane, and $Z^3$, $Z^5$, $Z^7$ and $Z^9$ belong 
			to the left half-plane. }
		\label{fig.LR8}
	\end{figure}
	\end{ex}
	
	\begin{lem}\label{l:ps}
		For $k \ge 4$, let $(a_k, b_k) = \min C_k$. Then $b_{k+2} \ge b_k$ and 
		$a_{k+2} \le a_k$. Moreover, $\lim_{k \to \infty}(a_k, b_k) = (0, 1)$.
	\end{lem}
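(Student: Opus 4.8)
The plan is to reduce the lemma to the case of even $k$ and then analyse the orbit of $Z$ along the top edge $\{b=1\}$, where $L^2_{a,1}$ acts near $P$ as an affine elliptic rotation; I write $x_n$ for the first coordinate of $Z^n$ and read $\min C_k$ as the point of $C_k$ with smallest first coordinate. A direct computation shows that on the left edge $\ell=\{(a,b):a+b=1,\ 0\le b<1\}$ of the triangle one has $Z=(\tfrac1{1-b},0)$, $Z^1=(0,\tfrac b{1-b})$ and $Z^2=Z$, so the orbit of $Z$ is $2$-periodic with $x_{2j}=\tfrac1{1-b}>0$ and $x_{2j-1}=0$ for all $j$. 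Hence $\ell\subseteq C_k$ for every odd $k\ge3$, so $\min C_k=(0,1)$ and the lemma is trivial for $k$ odd. For $k$ even the same computation gives $x_k=\tfrac1{1-b}>0$ on $\ell$, so $C_k\cap\ell=\varnothing$; extending the piecewise-algebraic analysis of Example~\ref{ex:pc} (and Figure~\ref{fig.Ci}), I would show that for even $k$ the first coordinate along the relevant branch of $C_k$ is minimized at $b=1$, so that $b_k=1$ and $b_{k+2}\ge b_k$ is trivial, leaving only $a_{k+2}\le a_k$ and $a_k\to0$ to be proved.

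\emph{The dynamics at $b=1$.} As long as the orbit of $Z$ stays strictly inside the fourth and second quadrants (at even, resp.\ odd, times), $L_{a,1}^2$ coincides on it with the affine map $v\mapsto P+A(v-P)$, $A=DL^2_{a,1}(P)$, whose eigenvalues $\tfrac12\bigl(2-a^2\pm ia\sqrt{4-a^2}\bigr)$ form a non-real conjugate pair of modulus $1$ for $0<a<2$. Thus $A$ is $\R$-conjugate to the plane rotation by the angle $\theta(a)$ (in fact $\theta(a)=2\arcsin(a/2)$), which is continuous, strictly increasing on $(0,2)$, and tends to $0$ as $a\to0^+$, and $Z^{2m}=P+A^m(Z-P)$ moves $Z^{2m}-P$ along a fixed ellipse at the angular position $m\theta(a)$. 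Since $P=(\tfrac1a,-\tfrac1a)$, $\re P=\tfrac1a$ and $a(Z-P)\to1+i$ as $a\to0$, I would check that this ellipse meets the line $\{\re z=-\re P\}$, and (via $Z^{2m-1}=L(Z^{2m-2})$) that the odd-time iterates obey the second-quadrant condition until a strictly later step. It then follows that the first time $n(a)\ge2$ at which the orbit of $Z$ leaves the correct open half-planes is finite for every $a\in(0,2)$, is even, is non-increasing in $a$ (a larger $a$ gives a larger rotation angle per step, so a prescribed cumulative angle — hence the first crossing — is reached in fewer steps), and tends to $\infty$ as $a\to0$.

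\emph{Monotonicity and the limit for even $k$.} By the description of $C_k$, $(a,1)\in C_k$ exactly when the orbit of $Z$ stays strictly inside the correct half-planes at all times $<k$ and grazes the $y$-axis at time $k$; so, granted that $\min C_k$ lies on $\{b=1\}$, one gets $a_k=\inf\{a\in(0,2):n(a)\le k\}$. Because $n$ is non-increasing, $\{a:n(a)\le k+2\}\supseteq\{a:n(a)\le k\}$, whence $a_{k+2}\le a_k$; and because $n(a)<\infty$ for all $a\in(0,2)$, the sets $\{a:n(a)\le k\}$ increase to $(0,2)$ as $k\to\infty$, so $a_k\to0$. Combined with $b_k=1$, this gives $(a_k,b_k)\to(0,1)$.

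\emph{Main obstacle.} The hard part will be the two facts I have used as black boxes: that for even $k$ the first coordinate is minimized along $C_k$ at $b=1$, and the exact bookkeeping of the odd-index half-plane conditions. The first is the statement that lowering $b$ below $1$ — which turns the elliptic rotation into a strict inward spiral — does not produce a crossing at a smaller $a$; this is elementary but delicate, precisely because $b=1$ is the area-preserving borderline, where the orbit of $Z$ merely circles $P$ instead of converging to it. For $b<1$ the convex-hull contraction from the proof of Theorem~\ref{t:zel} and the logarithmic spiral of Lemma~\ref{l1} make the analogous assertions transparent, and I would lean on them both as a guide and as a consistency check.
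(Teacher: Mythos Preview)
Your reading of $\min C_k$ as ``the point of $C_k$ with smallest first coordinate'' is not what the paper intends: in the proof of Proposition~\ref{p:scaling} the authors write that $(a_{2k},b_{2k})$ is the parameter ``for which $Z^{2k}$ lies on the $y$-axis and $b_{2k}$ is minimal for that property'', so $\min C_k$ means the point of $C_k$ with minimal $b$, and the paper explicitly computes $b_6>b_4$ as strict inequalities between numbers below $1$. Under that reading both of your reductions collapse. For odd $k$ your observation that the degenerate edge $\ell=\{a+b=1\}$ sits inside $C_k$ is correct as written, but with the min-$b$ convention it would force $b_k=0$, not $(0,1)$; the paper is tacitly working with the ``important'' branch (compare the parenthetical remarks in Example~\ref{ex:pc}) and treats the odd case as genuinely nontrivial. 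For even $k$ your central claim $b_k=1$ is simply false in the intended sense: the min-$b$ point of $C_{2k}$ lies near the line $t=t_0$ of Theorem~\ref{t1} (this is exactly why $\phi=7\pi/4$ appears in Proposition~\ref{p:scaling}), not on $b=1$. So the whole elliptic-rotation analysis on $\{b=1\}$, while pleasant in itself, is aimed at the wrong endpoint of the curve.

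The paper's argument is quite different and does not single out any edge of the parameter square. Its key device is the argument $\phi(a,b)=\arctan\frac{a\sqrt{4b-a^2}}{2b-a^2}$ of the complex eigenvalue of $DL^2_{a,b}$ at $P$, which one checks is increasing in $a$ and decreasing in $b$. Passing from the configuration ``$Z^{2k}$ on the $y$-axis with the earlier iterates in the correct half-planes'' to ``$Z^{2k+2}$ on the $y$-axis with the same constraints'' forces the per-step rotation angle to shrink, hence $\phi$ to decrease, hence $a$ to decrease and $b$ to increase; this gives $a_{2k+2}\le a_{2k}$ and $b_{2k+2}\ge b_{2k}$ directly, with the odd case handled the same way. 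The limit $(a_k,b_k)\to(0,1)$ then comes from Theorem~\ref{t1}. Even if you keep your min-$a$ convention as a separate statement, note that you yourself flag the assertion ``the first coordinate along $C_k$ is minimized at $b=1$'' as a black box and ``the hard part''; that is precisely the place where a monotonicity statement in both parameters (like the paper's $\phi$-monotonicity) is needed, so the proposal as it stands does not yet contain a proof.
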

	\begin{proof}
		Recall that for $b > a^2/4$, the eigenvalues of $DL^2_{a,b}$ are 
		$\lambda(a, b) = u \pm iv$, with $u = (2b - a^2)/2$, 
		$v = a\sqrt{4b-a^2}/2$, $|\lambda| = b$, and the argument of $\lambda$ 
		is $\phi = \phi(a,b) = \arctan \frac{a\sqrt{4b - a^2}}{2b - a^2}$. By 
		calculating the partial derivatives of $\phi(a,b)$ with respect to $a$ 
		and with respect to $b$, it is easy to see that: For $a$ fixed, the map
		$b \mapsto \phi(a, b)$ is a decreasing function, and for $b$ fixed, the 
		map $a \mapsto \phi(a, b)$ is an increasing function.
		
		First, we will prove by induction that $b_{2k+2} \ge b_{2k}$ and 
		$a_{2k+2} \le a_{2k}$ for every $k \ge 2$. By direct calculation we have 
		that $b_6 > b_4$ and $a_6 < a_4$. For $(a_6, b_6)$ the point $Z^6$ belongs 
		to the $y$-axis. In transition from that state to the state where $Z^8$ 
		belongs to the $y$-axis while $Z^{2j}$ are in the right half-plane and 
		$Z^{2j+1}$ are in the left half-plane, for $j = 1,2,3$, the map  $\phi$ 
		decreases, implying that $a$ decreases and $b$ increases. Therefore, 
		$b_8 \ge b_6$ and $a_8 \le a_6$. 
		
		Let us consider $(a_{2k}, b_{2k})$. 
		For that pair of parameters the point $Z^{2k}$ belongs to the $y$-axis. 
		Now again, in transition from that state to the state where  $Z^{2k+2}$ 
		belongs to the $y$-axis, $Z^{2j}$ are in the right half-plane and 
		$Z^{2j+1}$ are in the left half-plane, for $j = 1, \dots , k$, the map  
		$\phi$ decreases, implying that $a$ decreases and $b$ increases. 
		Therefore, $b_{2k+2} \ge b_{2k}$ and $a_{k+2} \le a_k$.
		
		In an analogous way we can get $b_{2k+3} \ge b_{2k+1}$ and 
		$a_{2k+3} \le a_{2k+1}$ for every $k \in \N$. 
		
		Note that in every move from $(a_k, b_k)$ to $(a_{k+2}, b_{k+2}$) we can 
		obtain that only finitely many iterates of $Z$ belong to the $y$-axis (at 
		most two in the upper half-plane and two in the lower half-plane). Since 
		the orbit of $Z$ is countable infinite, we have infinitely many different 
		points $(a_k, b_k)$ and, by Theorem \ref{t1}, 
		$\lim_{k \to \infty}(a_k, b_k) = (0, 1)$.
	\end{proof}	
	
	The next proposition gives the scaling of the sequence $(a_{2k})_{k \in \N}$.
	\begin{prop}\label{p:scaling} 
		$\lim_{k \to \infty}2ka_{2k} = 7\pi/4$.
	\end{prop}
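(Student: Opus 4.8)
The plan is to read the scaling off the logarithmic spiral of Theorem~\ref{t1}, using Lemma~\ref{l:ps} to guarantee that the points $(a_{2k},b_{2k})=\min C_{2k}$ really converge to $(0,1)$.

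First I would put $t_k:=(1-b_{2k})/a_{2k}$, so that $b_{2k}=1-t_k a_{2k}$ with $t_k\in(0,1)$ and, by Lemma~\ref{l:ps}, $a_{2k}\to 0$ while $(a_{2k},b_{2k})\to(0,1)$. It is enough to show that every subsequence of $(2k a_{2k})_k$ has a further subsequence tending to $\tfrac74\pi$, so I pass to a subsequence along which $t_k\to t$ and $k a_{2k}\to\phi$ (the boundedness $a_{2k}=O(1/k)$ coming from the fact that the whole orbit of $Z$ is attracted to $\{P,P'\}$ and spirals in at a definite angular speed, together with Lemma~\ref{l:ps}).

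Next comes the algebraic core. On the curve $C_{2k}$ the inequality constraints in its definition say precisely that the even iterates $Z=Z^0,Z^2,\dots,Z^{2k-2}$ all lie in the closed region $\mathfrak{R}=\{x\ge 0,\ 1+y-a|x|\le 0\}$ on which $L_{a,b}^2$ is the affine map $v\mapsto P+A(v-P)$ (the second inequality defining $\mathfrak{R}$ is exactly ``$x\big(L_{a,b}(\cdot)\big)\le 0$'', i.e. the corresponding odd iterate lies in the closed left half-plane). Hence along $C_{2k}$ one has the exact identity $Z^{2k}-P=A^{k}(Z-P)$, and so $a_{2k}Z^{2k}=a_{2k}P+A^{k}\big(a_{2k}Z-a_{2k}P\big)$. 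Letting $k\to\infty$ along the chosen subsequence, and invoking Lemma~\ref{l1} — now with the slope $t$ allowed to vary, which is legitimate because the constant $K=K_{\alpha,\eps}$ there was taken to depend only on a two-sided bound for $t$ — together with the limits $a_{2k}P\to p$ and $a_{2k}Z\to z$ recorded in the proof of Theorem~\ref{t1}, one obtains
\[
a_{2k}Z^{2k}\longrightarrow p+(z-p)e^{-t\phi}e^{-i\phi},
\]
i.e. the scaled point $Z^{2k}$ converges to the point of the Theorem~\ref{t1} spiral reached after turning through the angle $\phi$. The same scaling converts the remaining constraints of $C_{2k}$ into ``the spiral $\psi\mapsto p+(z-p)e^{-t\psi}e^{-i\psi}$ stays in the closed fourth quadrant for $\psi\in[0,\phi]$'' and converts the defining equation $x(Z^{2k})=0$ into $\re\big(p+(z-p)e^{-t\phi}e^{-i\phi}\big)=0$.

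Finally I would invoke the minimality of $a_{2k}$. Near $(0,1)$ the curve $C_{2k}$ can be written $a\approx\Phi(t)/k$, where $\Phi(t)$ is the first turning angle at which the spiral meets a coordinate axis while remaining in the closed fourth quadrant, so that the leftmost point of $C_{2k}$ is attained where $\Phi$ is smallest. The crux — and what I expect to be the main obstacle — is to show that in the limit this extremal configuration is exactly the tangency configuration of Theorem~\ref{t1}, i.e. the one governed by equation~\eqref{e1} (equivalently~\eqref{e3}): one must rule out the competing configurations in which the even iterates would meet the $y$-axis first, and verify that among all admissible pairs $(t,\phi)$ it is the horizontal tangency that is the binding constraint. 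Granting this, the computation already carried out in the proof of Theorem~\ref{t1} — the spiral needs to turn through $\tfrac74\pi$ to pass from its initial slope $-1$ at $z$ to the tangency — pins down $\phi$, and a careful accounting of how many iterates of $L_{a,b}$ are needed to realize that turn gives $\lim_{k\to\infty}2k a_{2k}=\tfrac74\pi$, the matching lower bound coming from the lower semicontinuity established in Theorem~\ref{t1}. The two genuinely technical points are this identification of the extremal configuration and the uniform‑in‑$t$ refinement of Lemma~\ref{l1} that makes all the limits above legitimate.
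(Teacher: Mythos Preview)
Your approach is the same as the paper's: invoke Lemma~\ref{l1} to approximate the broken line $Z,Z^2,Z^4,\dots$ by the logarithmic spiral of Theorem~\ref{t1}, and then read off the limiting angle $7\pi/4$ from that theorem. The paper's proof is in fact terser than yours on the crucial identification step --- it simply asserts ``by Theorem~\ref{t1} we have $\phi=7\pi/4$'' --- so the gap you flag as ``the main obstacle'' is present there as well and is not filled in.

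That said, your heuristic for resolving that gap does not work as written, and in places points in the wrong direction. Two concrete issues. First, you interpret $(a_{2k},b_{2k})=\min C_{2k}$ as the \emph{leftmost} point of $C_{2k}$ and then minimize your $\Phi(t)$; but the paper makes explicit in the proof that it is $b_{2k}$ that is minimal, and in any case the infimum of your $\Phi$ (the first exit angle of the spiral from the closed fourth quadrant) over $t\in(0,1)$ is attained as $t\downarrow 0$ with $\Phi\to\pi$, not at $t=t_0$ with $\Phi=7\pi/4$. Second, the defining equality of $C_{2k}$ is $x_{2k}=0$, which in your own limiting analysis becomes $\re\big(p+(z-p)e^{-t\phi}e^{-i\phi}\big)=0$ --- this is the $y$-axis condition, whose tangential version is equation~\eqref{e4} with angle $5\pi/4$. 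The angle $7\pi/4$ and equation~\eqref{e1}/\eqref{e3} that you invoke belong to the $x$-axis tangency $\im(\cdot)=0$. The numerics in Figure~\ref{fig.scaling} (where $t\approx t_0$, not $t_1$) show that the extremum of $C_{2k}$ is indeed governed by the $x$-axis tangency, but this means it is one of the \emph{inequality} side-constraints of $C_{2k}$ that becomes active at the minimum (an odd iterate on the $y$-axis, hence the next even iterate on the $x$-axis), not the defining equality you scaled. Your paragraph conflates these two tangencies; a correct argument must explain why minimizing $b$ along $C_{2k}$ drives $t$ up to $t_0$ and activates the $x$-axis constraint at $7\pi/4$, rather than minimizing $\Phi$.
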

	\begin{proof}
		By lemma \ref{l1} we have that as $a \to 0$, the broken line joining
		consecutive points $Z^{2n} = A^nZ$, $n \in \N$, converges to the 
		logarithmic spiral $\phi\mapsto Ze^{-t\phi}e^{-i\phi}$ in the fourth 
		quadrant which start at $Z$ and has center at $P$, with 
		$n = \lfloor\frac{\phi}{a}\rfloor$. 
		Since we are interested in parameters $(a_{2k}, b_{2k})$, $k \in \N$, 
		$k \ge 2$, for which $Z^{2k}$ lies on the $y$-axis and $b_{2k}$ is 
		minimal for that property, by Theorem \ref{t1} we have $\phi = 7 \pi/4$.
		Therefore, $\lim_{k \to \infty}2ka_{2k} = 7\pi/4$.
	\end{proof}
	Figure \ref{fig.scaling} shows a part of region $R$ obtained numerically (in 
	white), but in coordinates $c$ (the horizontal axis) and $t$ (the vertical axis), 
	where $c = 1/a$ and $t = (1-b)/a$. With such a choice of coordinates, scaling of 
	the sequence $((a_k, b_k))_{k=4}^\infty$, where $(a_k, b_k) = \min C_k$, is easy 
	to notice.
	\begin{figure}[ht]
		\centering
		\includegraphics[width=12.0cm,height=6.0cm]{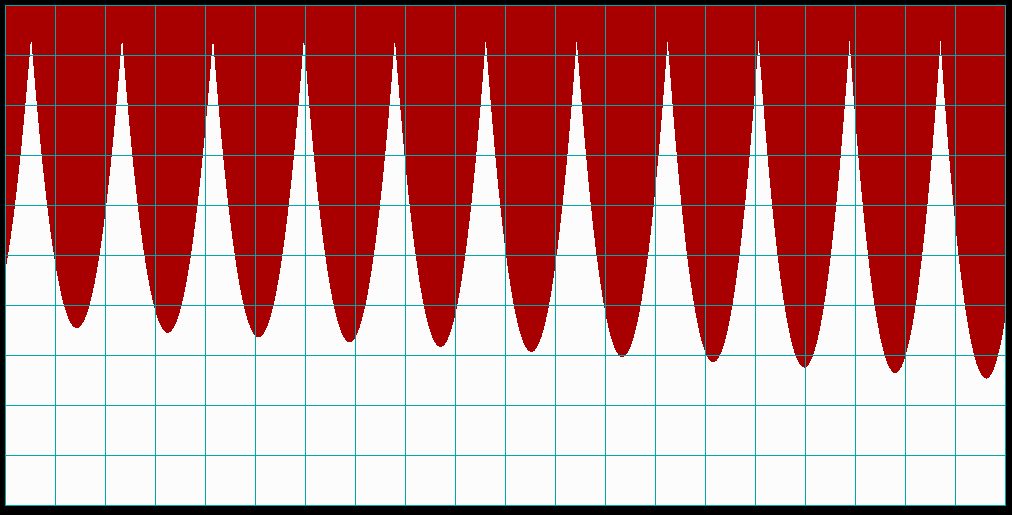}
		\caption{The horizontal axis is $c = 1/a$, and it goes from 24 to 26. The 
			vertical axis is $t = (1-b)/a$, and it goes from 0.05355 to 0.0536. A part 
			of region $R$ obtained numerically is in white.}
		\label{fig.scaling}
	\end{figure}
	
	\section{Numerical results}\label{sec:nr}
	
	Figure \ref{fig.mfr} shows two regions in the parameter space 
	(the parameter $a$ is on the horizontal axis and the parameter $b$ is 
	on the vertical axis). As before, the region $R$ is in white. The red
	region presents the set of parameters where not all odd iterates of $Z$
	are in the second quadrant and not all even iterates of $Z$ are in the
	fourth quadrant, but still there are no homoclinic points of the fixed 
	point $X$. 	
	\begin{figure}[ht]
		\centering
		\includegraphics[trim=0cm 3.5cm 0cm 3.5cm, clip=true, 
		width=12.5cm,height=6.0cm]{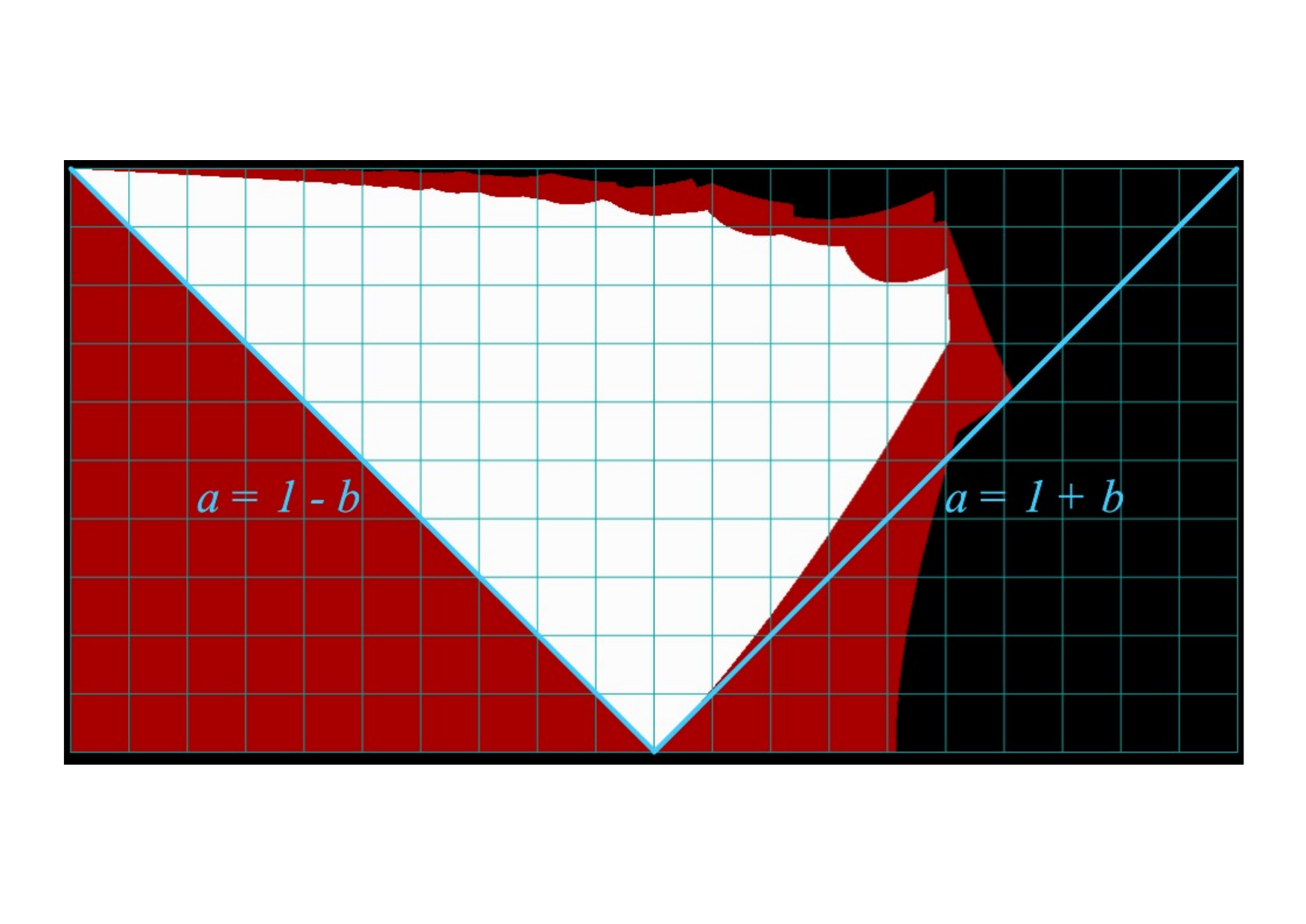}
		\caption{The figure shows several regions in the parameter space 
			obtained numerically. The parameter $a$ is on the horizontal 
			axis and goes from 0 to 2. The parameter $b$ is on the vertical 
			axis and goes from 0 to 1. The region $R$ is in white. The red 
			and white regions together present the set of parameters where 
			there are no homoclinic points of the fixed point $X$. In the 
			region $1-b < a < 1+b$ the period-two points $P'$ and $P$ are 
			attracting. Note that for $a > 1+b$ the period-two points $P'$ 
			and $P$ are saddle, so the Lozi map $L_{a,b}$ has heteroclinic 
			points. For the parameters in the black region, $X$ has homoclinic
	    	points.}
		\label{fig.mfr}
	\end{figure} 	
	Figure \ref{fig.WuWs} shows the stable and unstable manifolds of the fixed 
	point $X$ for two different pairs of parameters, both from the red region
	(the stable manifold is in yellow and the unstable manifold is in white).

	\begin{figure}[ht]
		\centering
		\begin{tabular}{cc}
			\includegraphics[width=6.0cm,height=6.0cm]{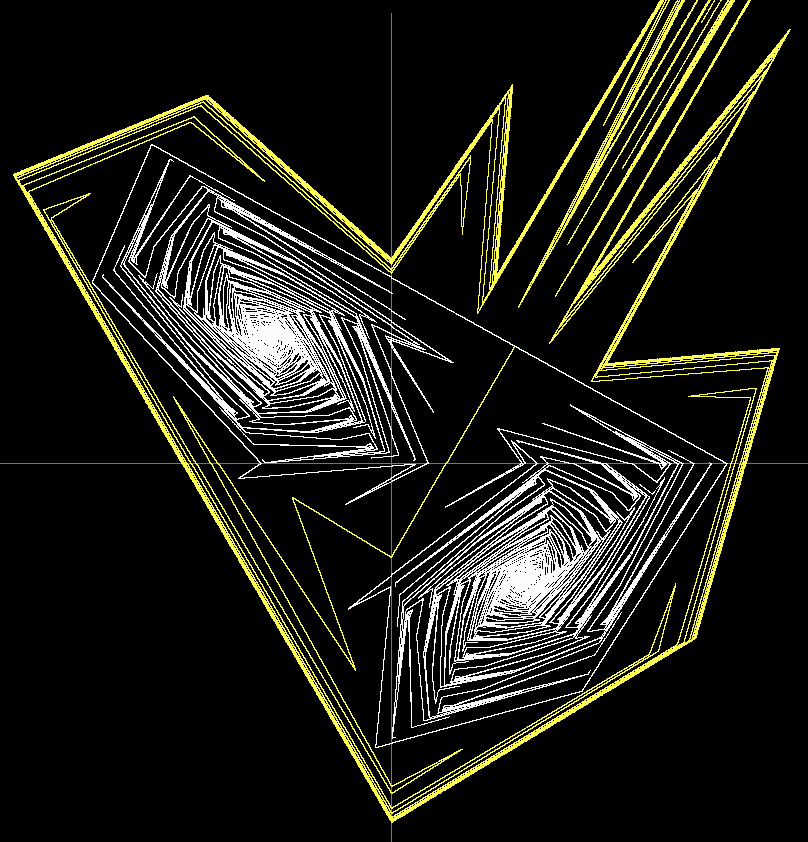} & 
			\includegraphics[width=6.0cm,height=6.0cm]{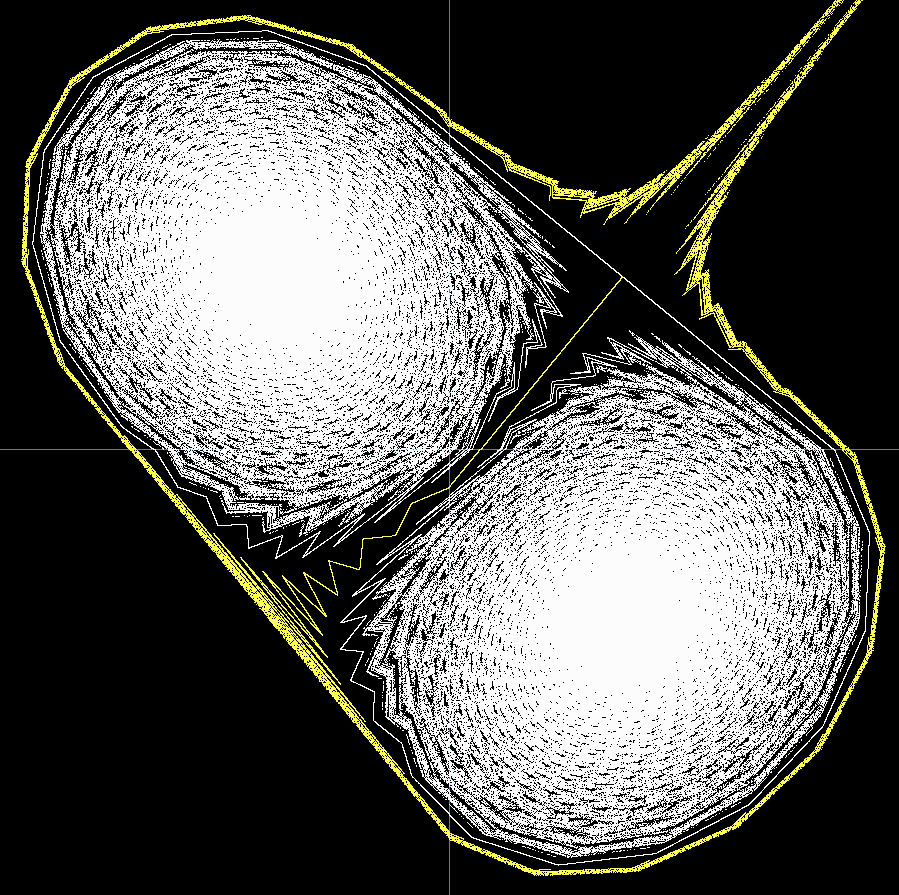}
		\end{tabular}
		\caption{In both figures, the horizontal and vertical lines are the $x$- 
			and $y$-axes of the phase space, respectively, the stable manifold 
			of the fixed point $X$ is in yellow, and the unstable manifold is 
			in white. Left $a = 1.16$, $b = 0.95$; right $a = 0.4$, $b = 0.997$.}
		\label{fig.WuWs}
	\end{figure}
	
	The boundary of the red region consists of algebraic curves that are 
	given by the existence of certain `tangential' homoclinic points of $X$. 
	Below we give equations of the first four curves that form the right-hand 
	part of the boundary of the red region (see Figure \ref{fig.B}):
	\begin{enumerate}[$B_1:$]
		\item Curve $B_1$ is given by the equation
		$$a^3 - 4a + (a^2 - 2b) \sqrt{a^2 + 4b} = 0.$$
		
		For $(a,b) \in B_1$ and $b \in [0, 0.549134]$ the stable and unstable
		manifolds of $X$ intersect in $L^2(Z)$ `tangentially':  
		$L^2(Z) \in [K, V] \subset [X, V] \subset W^s_X$, where $K$ lies on the
		$x$-axis, $V$ lies on the $y$-axis and the interval $(K, V)$ does not 
		intersect the coordinate axes.
		
		\item Curve $B_2$ is given by the equation
		
		$2a^6 + 2a^5(1 - 2b) - 11a^4b + a^3b(4b^2 + 6b - 9) + 3a^2b^2(2b + 1) + 
		4ab^2(b^2 + b + 1) + 4b^3 + (2a^5 + 2a^4(1 - 2b) - 7a^3b + a^2b(4b^2 + 
		6b - 5) + ab^2(2b + 5) + 2b^2)\sqrt{a^2 + 4b} = 0$.
		
		For $(a,b) \in B_2$ and $b \in [0.549134, 0.602505]$ the stable and 
		unstable manifolds of $X$ intersect `tangentially' in $V$: 
		$V \in [L^2(Z), T] \subset W^u_X$, where $T$ lies on the $x$-axis,
		$T \ne Z$, and the segment $[L^2(Z), T]$ intersects the
		$x$-axes only at the point $T$.
		
		\item Curve $B_3$ is given by the equation
		$$a - b - 1 = 0, \ \ b \in [0.602505, 0.617056].$$
		
		\item Curve $B_4$ is given by the equation
		$$a^5 + 2a^3(b - 3) - 6 a^2 - 4a + 4ab(b - 1) + 
		(a^4 + 2a^2 + 2a - 2b^2)\sqrt{a^2 + 4b} = 0.$$
		
		For $(a,b) \in B_4$ and $b \in [0.617056, 0.946803]$ the stable and 
		unstable manifolds of $X$ intersect `tangentially' in $L^4(Z)$: 
		$L^4(Z) \in [K, V]$.	
	\end{enumerate}
More boundary curves of the red region can be found in \cite{K}.
	\begin{figure}[ht]
		\centering
		\includegraphics[width=6.0cm,height=6.0cm]{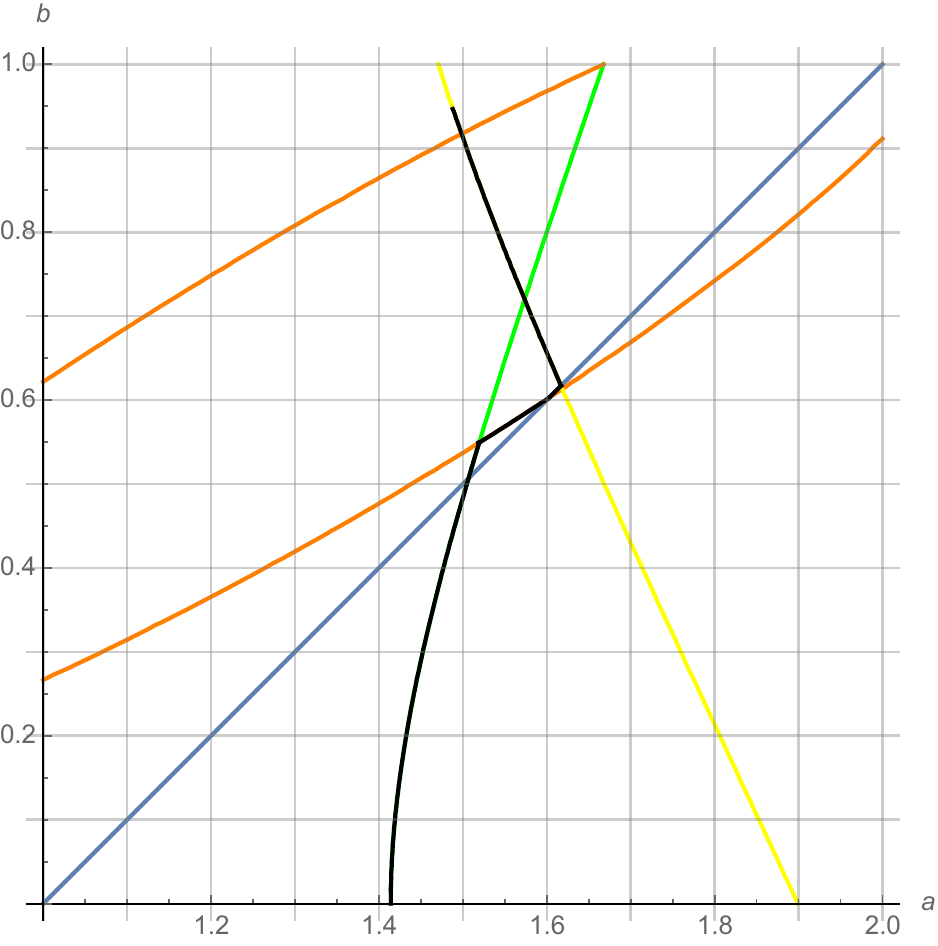}
		\caption{$B_1$ is green, $B_2$ is orange (only the lower branch is 
			important), $B_3$ is blue and $B_4$ is yellow; the part of the 
			boundary of the red region which is contained in the given curves 
			is in black.}
		\label{fig.B}
	\end{figure}

It might be tempting to think that the topological entropy of the Lozi map 
is zero if the period-two orbit $\{ P, P' \}$ is attracting and the fixed 
point $X$ does not have homoclinic points, that is, for parameters $0 < b < 1$ 
and $1 - b < a < 1 + b$ which lie within the white and red regions of 
Figure \ref{fig.mfr}. However, through private communication with Ishii and 
Sands, it seems that there are counterexamples to this hypothesis. They 
discovered that the Lozi map with parameters $(a, b) = (1.60, 0.61)$ has a 
period 6 saddle point whose stable and unstable manifolds exhibit transversal 
homoclinic intersections, and there are other counterexamples, such as in 
a small neighborhood of $(a, b) = (1.5, 0.5)$.

\vspace{1cm}
	
	{\bf Data availability statement:} This manuscript has no associated data.
	
\vspace{1cm}

\vspace{1cm}

	\noindent
	Micha{\l} Misiurewicz\\
	Department of Mathematical Sciences\\
	Indiana University Indianapolis\\
	402 N.\ Blackford Street, Indianapolis, IN 46202\\
	\texttt{mmisiure@iu.edu}\\
	\texttt{https://math.indianapolis.iu.edu/$\sim$mmisiure}
	
	\medskip
	\noindent
	Sonja \v Stimac\\
	Department of Mathematics\\
	Faculty of Science, University of Zagreb\\
	Bijeni\v cka 30, 10\,000 Zagreb, Croatia\\
	\texttt{sonja@math.hr}\\
	\texttt{https://web.math.pmf.unizg.hr/$\sim$sonja/}
	

\begin{thebibliography}{99}
		
		\bibitem{IS}
		Y.\ Ishii, D.\ Sands, \emph{Rigorous entropy computation for the Lozi family}, 
		private communication with I.\ B.\ Yildiz, 2007.
		
		\bibitem{K}
		K.\ Kilassa Kvaternik, \emph{Tangential homoclinic points for Lozi maps}, 
		arXiv:2412.12536 math.DS, 2024.
		
		\bibitem{L}
		R.\ Lozi, \emph{Un attracteur etrange(?) du type attracteur de 
			H\'enon}, J.\ Physique (Paris) {\bf 39} (Coll. C5) (1978), 9--10.
		
		\bibitem{M}
		M.\ Misiurewicz, \emph{Strange attractor for the Lozi mappings}, 
		Ann.\ New York Acad.\ Sci.\ {\bf 357} (1980) (Nonlinear Dynamics), 
		348--358.
		
		\bibitem{MS2}
		M.\ Misiurewicz, S.\ \v Stimac, \emph{Lozi-like maps}, Discrete and 
		Continuous Dynamical Systems - Series A {\bf 38} (2018), 2965--2985. 
		
		\bibitem{Y1} 
		I.\ B.\ Yildiz, \emph{Monotonicity of the Lozi family and the zero 
			entropy locus}, Nonlinearity {\bf 24} (2011) 1613--1628.
		
	\end{thebibliography}
\end{document}